\documentclass[11pt]{amsart}
\usepackage[T1]{fontenc}
\usepackage[english]{babel}
\usepackage{kpfonts}

\usepackage{amsmath,amssymb,amsthm, enumitem,xspace}
\usepackage{comment}
\usepackage{accents}
\usepackage[colorlinks=true,citecolor=blue,linkcolor=blue]{hyperref}
\usepackage{cleveref}
\usepackage{mathrsfs}

\numberwithin{equation}{section}

\newtheorem{thm}{Theorem}[section]

\newtheorem{lem}[thm]{Lemma}

\newtheorem{ithm}{Theorem}

\newtheorem{icor}[ithm]{Corollary}

\newtheorem*{iprob*}{Problem}

\theoremstyle{definition}

\newtheorem{defi}[thm]{Definition}
\newtheorem{rem}[thm]{Remark}

\newcommand*{\cfrak}{\mathfrak c}

\newcommand*{\sO}{\mathscr{O}}

\newcommand*{\RR}{\mathbf{R}}
\newcommand*{\NN}{\mathbf{N}}

\newcommand*{\CC}{\mathbf{C}}
\newcommand*{\QQ}{\mathbf{Q}}

\newcommand*{\SL}{\mathbf{SL}}

\newcommand*{\GL}{\mathbf{GL}}

\newcommand*{\se}{\subseteq}

\newcommand*{\fhi}{\varphi}

\newcommand*{\Id}{\mathrm{Id}}
\newcommand*{\wt}{\widetilde}

\DeclareMathOperator{\Aut}{Aut}
\DeclareMathOperator{\Sym}{Sym}
\DeclareMathOperator{\Ad}{Ad}

\DeclareMathOperator{\res}{res}
\newcommand*{\pui}[1]{\{\{#1\}\}}
\newcommand*{\QRC}{{\QQ^\mathrm{rc}}}

\title[Lie groups are countably representable]{Lie groups are countably representable}

\author[N. Monod]{Nicolas Monod}
\address{N.M., \'Ecole Polytechnique F\'ed\'erale de Lausanne (EPFL), Switzerland}
\email{nicolas.monod@epfl.ch}

\author[A. Thom]{Andreas Thom}
\address{A.T., Institute of Geometry, TU Dresden, 01062 Dresden, Germany}
\email{andreas.thom@tu-dresden.de}

\begin{document}

\begin{abstract}
We prove that every Lie group can be realized by permutations of a countable set, answering a classical question of Ulam. 

The key deformation construction originated in an argument produced by OpenAI's GPT 6 Astra during extended mathematical discussions.
\end{abstract}

\makeatletter
\@namedef{subjclassname@2020}{\textup{2020} Mathematics Subject Classification}
\makeatother
\subjclass[2020]{Primary 22E15; Secondary 20B07, 12J10}
\keywords{Ulam's problem, Lie groups, countable representability,
permutation groups, subrings of countable index, Puiseux series,
central extensions}

\maketitle


\section{Introduction}
A group $G$ is \textbf{countably representable} if it can be realized by symmetries of some countable structure; equivalently, if $G$ embeds as a subgroup of the permutation group $\Sym(\NN)$. Besides the trivial case where $G$ itself is countable, there are many natural examples of locally compact or Polish groups in this class, including all non-Archimedean Polish groups and hence, by van Dantzig’s theorem, all totally disconnected second countable locally compact groups.
Less naturally, Schreier and Ulam discovered in 1935 that $\RR$ is countably representable; of course, such a representation has to be discontinuous and non-constructive. This prompted Ulam to ask whether all Lie groups would have the same property~\cite[II.7]{Ulam_book60}, \cite[V.2]{Ulam_book64}. The purpose of this article is to establish a positive answer.

\begin{ithm}\label{thm:Ulam}
Every Lie group is countably representable.
\end{ithm}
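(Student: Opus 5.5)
The basic tool is the standard criterion: a group $G$ is countably representable if and only if it has a countable family of subgroups $H_n$ of countable index with $\bigcap_n \bigcap_{g} gH_ng^{-1}=\{1\}$. Equivalently, some such family has trivial \emph{core}, since the action on $\bigsqcup_n G/H_n$ is then faithful on a countable set. Countable representability also passes to subgroups, to countable direct products, and to extensions by countable groups, as can be seen by inducing actions. The plan is to reduce to linear groups over suitable fields and then use a ring-theoretic construction. The reduction has three steps.

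\emph{Step 1: reduction to the connected case.} If $G$ is a Lie group, then $G/G^0$ is countable, so it suffices to treat $G^0$: a faithful countable $G^0$-set $X$ induces the faithful countable $G$-set $G\times_{G^0}X$.

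\emph{Step 2: reduction to linear groups.} A connected Lie group $G^0$ has the form $\widetilde{G}/D$, where $\widetilde{G}$ is simply connected and $D$ is a discrete central subgroup. By Ado's theorem, $\widetilde{G}$ is a finite-index extension of a subgroup built from a linear group and a central part. More precisely, $\widetilde{G}$ embeds up to a central extension: the image of $\widetilde{G}$ under the adjoint representation is linear, and the kernel is central. I would therefore treat two ingredients. First, linear groups $\GL_n(\RR)$: here I would show that $\RR$, or better a real closed field structure, admits subrings of countable index as additive groups that are suitably "generic". Second, central extensions by abelian Lie groups, which are handled via $\RR$ and the Schreier--Ulam result. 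Passing to quotients by discrete central subgroups is not automatic, so I would realize $G^0$ as a subgroup of a group of the form $\GL_n$ over a field extension modulo something, or handle the centre by an explicit cocycle deformation.

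\emph{Step 3: $\GL_n(\RR)$ is countably representable.} The idea, following the keywords, is to embed $\RR$ into a field of Puiseux series $\QRC\pui{t}$ over the real algebraic numbers. One can do this abstractly, since both are real closed fields of cardinality $\cfrak$ once a suitable large transcendence degree is chosen (for instance, after a saturation/embedding argument). There $\sO=\QRC[[t^{1/\infty}]]$ contains the subrings $\sO_m=\QRC + t^{1/m}\dots$, and more importantly the subgroups $\GL_n(\QRC+t^k \sO)$ have countable index. Indeed, $\GL_n(\sO)/\GL_n(1+t^k\sO)$ is countable, because truncated Puiseux series modulo $t^k$ with bounded denominators form a countable set; countability is arranged by intersecting over denominators. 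Conjugating these subgroups by elements $\mathrm{diag}(t^a,\dots)$ and by a countable dense subgroup, their intersection is central, which gives faithfulness modulo the centre. The main obstacle is that $\GL_n(\sO)$ itself need not have countable index in $\GL_n$ of the field, so one needs a "deformation": a family of automorphisms or embeddings of $\RR$ into the series field, parametrized countably, such that every $g\in\GL_n(\RR)$ lands in $\GL_n(\sO)$ for some member of the family. I would try to build these by rescaling $t\mapsto$ varying valuations; this is the step I expect to be the crux.

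Finally, the centre and the discrete central quotients would be handled by showing that the center of $\widetilde G$, a finitely generated abelian group times a vector group, admits compatible countable-index subgroups, which extends the action to $\widetilde G/D$.
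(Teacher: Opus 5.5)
There is a genuine gap, and it is exactly the central-extension problem the paper is about. Your Step~1 is fine. Step~3 at best reproves Thomas's theorem that linear Lie groups are countably representable. The hard case is that a connected simple Lie group $G$ is in general only a central extension $1\to Z_G\to G\to\Ad(G)\to 1$ of the linear group $\Ad(G)$ (finite covers of $\SL_2(\RR)$, metaplectic groups). Countable representability need not pass to extensions with countable central kernel, even when $|Z_G|=2$.

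Faithful countable actions of $\Ad(G)$ and of $Z_G$ cannot simply be combined into one of $G$. What you need is a countable-index subgroup of $G$ meeting $Z_G$ trivially. Essentially, this is a countable-index subgroup $\Gamma<\Ad(G)$ that lifts to $G$, i.e.\ on which the extension class in $H^2(\Ad(G),Z_G)$ restricts to zero. Your last paragraph (compatible countable-index subgroups of the centre) and the phrase ``explicit cocycle deformation'' do not address this. Subgroups obtained from a Steinitz-type Puiseux embedding give no control over that restricted class.

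This is the paper's new input. Theorem~\ref{thm:path} builds a countable-index subalgebra $A\subseteq\RR$ with a pointwise-continuous family of unital ring homomorphisms $\fhi_\tau$, running from $\fhi_0(A)=F$ to the inclusion. Take $I=\ker\fhi_0$ and $\Gamma=\Ad(G)\cap\SL_d(\RR,I)$. Applying $\fhi_\tau$ entrywise gives homomorphisms $f_\tau\colon\Gamma\to\Ad(G)$ from the trivial map to the inclusion. Lifting these paths through the covering $G\to\Ad(G)$ produces the lift $\wt\Gamma$, and then $G$ acts faithfully on $G/\wt\Gamma\times X$ for a faithful countable $\Ad(G)$-set $X$.

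Your Step~2 is also not a reduction. Passing from an arbitrary connected Lie group to simple groups with finite centre is a nontrivial theorem (Conversano--Monod), which the paper invokes.

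Step~3 has independent errors. First, $\RR$ does not embed in $\QRC\pui{t}$ at all. Any field embedding of $\RR$ into a real closed field preserves the order, since positives are squares. So its image is an Archimedean subfield, which lies in $\sO$ and injects via the residue map into the countable field $\QRC$. The correct route is $\RR\subset\CC\cong\overline{\QQ}\pui{t}$ via Steinitz, then intersecting with $\RR$.

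Second, the ``main obstacle'' you name does not exist. Because the residue field is countable, $\sO$ already has countable index in the Puiseux field. Hence $\GL_n(\sO)$ and the congruence subgroups already have countable index, and the linear case needs no family of embeddings. A deformation is needed, but for the central extension rather than for linearity. It is also of a different kind: a continuous path of ring homomorphisms in a real parameter, not a variation of valuations.
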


The usual convention is understood: Lie groups are second countable, or equivalently have at most countably many connected components (otherwise the statement fails already in dimension zero).

As proved in Proposition~2 of~\cite{Monod_Ulam}, Theorem~\ref{thm:Ulam} implies the following more general statement.

\begin{icor}\label{cor:LC}
Every locally compact second countable topological group is countably representable.\qed
\end{icor}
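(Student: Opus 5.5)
The plan is to reduce Corollary~\ref{cor:LC} to Theorem~\ref{thm:Ulam} in two steps. First we pass to an open almost connected subgroup, using induced permutation actions. Then we write that subgroup as a countable inverse limit of Lie groups. We need two permanence properties of countable representability. (i) It passes to countable products: if $G_n$ acts faithfully on a countable set $X_n$ for each $n\in\NN$, then $\prod_n G_n$ acts faithfully on the countable disjoint union $\bigsqcup_n X_n$, coordinatewise. Hence any group embedding into $\prod_n G_n$ is countably representable. (ii) It passes from an open subgroup of countable index to the whole group, as explained next.

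For (ii), let $G$ be locally compact second countable and $H<G$ an open subgroup. Second countability forces $G/H$ to be countable. Suppose $H$ acts faithfully on a countable set $X$. Then $G$ acts on the induced set $G\times_H X$, the quotient of $G\times X$ by $(gh,x)\sim(g,hx)$. This is a countable set fibred over $G/H$ with fibres identified with $X$. If $g$ acts trivially, then $g$ fixes every point of the base $G/H$, so $g\in H$. Moreover $g$ acts on the fibre over the trivial coset through its action on $X$, so $g=e$ by faithfulness. Hence the induced action is faithful. This step is routine.

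It therefore suffices to treat an open subgroup $H$ that is almost connected, i.e.\ $H/H^\circ$ is compact. Such an $H$ exists by van Dantzig's theorem applied to the totally disconnected group $G/G^\circ$: take the preimage of a compact open subgroup. By the Gleason--Yamabe theorem, every identity neighbourhood in $H$ contains a compact normal subgroup $K$ with $H/K$ a Lie group. Since $H$ is second countable, we can choose a countable neighbourhood basis $U_1\supseteq U_2\supseteq\cdots$ of the identity. For each $n$ we pick a compact normal $K_n\subseteq U_n$ with $H/K_n$ Lie. Then $\bigcap_n K_n=\{e\}$, so the diagonal map $H\to\prod_n H/K_n$ is an injective homomorphism. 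Each $H/K_n$ is a second countable Lie group, hence countably representable by Theorem~\ref{thm:Ulam}. By (i), $H$ is countably representable, and by (ii) so is $G$.

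The only genuinely deep input beyond Theorem~\ref{thm:Ulam} is the Gleason--Yamabe structure theorem, which we invoke as a black box. The main point to verify is that its approximating compact normal subgroups can be taken inside arbitrarily small neighbourhoods, which is the standard form of that theorem for almost connected groups. Second countability is what makes all the index sets countable, and no continuity of the representations is needed anywhere. This is consistent with the remark that the statement fails without the countability assumption.
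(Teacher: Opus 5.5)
Your proof is correct; the paper gives no argument of its own here and simply cites Proposition~2 of~\cite{Monod_Ulam}, and what you wrote is the standard reduction behind that citation. Van Dantzig yields an almost connected open subgroup, Yamabe's theorem embeds it into a countable product of second countable Lie groups (each countably representable by Theorem~\ref{thm:Ulam}), and countable representability then passes to countable products and, via the induced action on $G\times_H X$, from an open subgroup of countable index to $G$. The individual checks you make (countability of $G/H$, faithfulness of the induced action, and $\bigcap_n K_n=\{e\}$) are all sound.
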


The only obvious necessary condition for countable representability is to have cardinality at most the continuum. Any Polish group has cardinality continuum (or countable in the discrete case) and would thus be a candidate. However, it is known that some Polish groups fail to be countably representable, for instance the group of homeomorphisms of the line (using~\cite{Rosendal-Solecki}). On the other hand, Corollary~\ref{cor:LC} states that all Polish groups that are locally compact are countably representable.

\subsection{Strategy of the proof}
The state of the art is as follows. On the one hand, all \emph{linear} Lie groups are countably representable by a result of Thomas \cite[\S2]{Thomas99} also discovered in~\cite{Kallman00} and ~\cite{Ershov-Churkin}. On the other hand, \cite{Conversano-Monod} reduces the general question to the case of connected \emph{simple} Lie groups. The remaining difficulty is that a connected simple Lie group is in general not linear, but only a central extension of a linear group.

This difficulty is substantial for at least two reasons. The first is that central extensions, even by a center of order two, can destroy countable representability; see the discussion in~\cite[\S5]{Monod_Ulam}. The second is that the obstruction is reflected in the cohomology of subgroups of countable index, which are by necessity ``wild'' and not concretely definable, as follows from~\cite{Solovay}.

\medskip

In order to explain how we overcome this central extension difficulty, it is useful to recall the only known proof method in the linear case. To produce subgroups of countable index in matrix groups, the strategy is to restrict the coefficients to (non-unital) subrings of $\RR$ that are of countable index (as additive subgroups). View $\RR$ in $\CC$ and apply Steinitz's theorem~\cite{Steinitz} to identify $\CC$ with \emph{any} algebraically closed field of characteristic zero and cardinality of the continuum. Such fields include examples with a natural countable valuation, for instance Puiseux series over $\overline{\QQ}$. The corresponding valuation ring and its ideals will provide the desired subrings of countable index.

This approach is highly non-constructive and will a priori be completely incompatible with the cohomological obstruction that arises for central extensions. Our proof will therefore rely on a much more detailed approach to subrings of $\RR$. Indeed we obtain a continuous family of subrings, in a suitable sense, interpolating between a ring of countable index and the zero ring. This continuous contraction will imply that the cohomological obstruction must vanish on the corresponding subgroups. The precise statement, which may be of independent interest, is as follows.

\begin{ithm}\label{thm:path}
Let $F \se \RR$ be a countable real-closed subfield. There exists a unital $F$-subalgebra $A\se \RR$ of countable index (as subgroup) and a one-parameter family of unital ring homomorphisms
\begin{equation*}
\fhi_\tau\colon A\longrightarrow\RR \kern3mm (0\leq \tau \leq1)
\end{equation*}
each fixing $F$ pointwise and such that:
\begin{enumerate}[label=\normalfont(\roman*)]
 \item $\fhi_0(A)=F$;
 \item $\fhi_1$ is the inclusion $A\hookrightarrow\RR$;
 \item for every $x\in A$, the map $\tau\mapsto\fhi_\tau(x)$ is continuous on $[0,1]$.
\end{enumerate}
\end{ithm}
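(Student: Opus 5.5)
The plan is to build $A$ by adjoining to $F$ a large algebraically independent family, on which ring homomorphisms can be prescribed freely, and then to enlarge this polynomial ring to countable index using valuation-theoretic ideas. Fix a transcendence basis $B$ of $\RR$ over $F$; it has cardinality of the continuum. On the polynomial ring $F[B]\se\RR$ any assignment of values to the elements of $B$ extends uniquely to an $F$-algebra homomorphism. So I would set
\[
\fhi_\tau(b)=\tau\, b \qquad (b\in B,\ 0\le\tau\le1).
\]
For each $x\in F[B]$, the map $\tau\mapsto\fhi_\tau(x)$ is then a polynomial in $\tau$, hence continuous. Moreover $\fhi_1$ is the inclusion and $\fhi_0$ is evaluation at $0$, with image $F$. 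Thus the polynomial ring already satisfies (i)--(iii). The whole difficulty is that $F[B]$ has uncountable index in $\RR$, so it must be enlarged while keeping the $\fhi_\tau$ defined and continuous.

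To enlarge it, I would use a Steinitz-type identification as in the linear case. Consider the valuation $v$ on $F(B)$ given by the $(B)$-adic (total degree) valuation, with residue map equal to evaluation at $0$. I would extend $v$ to the real closure $\RR$ of $F(B)$, with value group divisible and residue field $F$ (real-closed, hence $v$ can be chosen so that the residue field does not grow). The valuation ring $\sO$ then has countable index in $\RR$, because $\RR/\sO$ is controlled by ``polar parts.'' Here one must check that, after choosing $B$ and the extension suitably, only countably many polar parts occur, in analogy with Puiseux series over a countable field. I would then take $A\se\sO$ to be the subring of elements that are obtained from $F[B]$ by successive \emph{simple} real roots of monic polynomials whose reductions mod the maximal ideal have a simple root in $F$. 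This is a Henselian-style subring of $\sO$, and I would prove that it still has countable index.

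For such $x$, with minimal data $P(X)\in F[B][X]$ and simple residual root $x_0\in F$, define $\fhi_\tau(x)$ as the root of $P^{\fhi_\tau}(X)$ obtained by continuing $x_0$ from $\tau=0$. The implicit function theorem gives a unique continuous branch near $\tau=0$ with $\fhi_0(x)=x_0\in F$. The main obstacle is to continue this branch on all of $[0,1]$ and to arrive at $x$ itself when $\tau=1$. Along the path $\tau\mapsto P^{\fhi_\tau}$, real roots may collide or leave the real line. Multiplicativity and additivity of $\fhi_\tau$ would then follow from uniqueness of continuation, by comparing branches of the polynomials satisfied by $x+y$ and $xy$. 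To handle the continuation, I would first try to exploit the freedom in the choice of $B$: choose $B$ consisting of elements so small (or so generic) that, for all $\tau\in[0,1]$, the relevant discriminants $\operatorname{disc}(P^{\fhi_\tau})$ never vanish. This would be arranged by a transfinite construction of $B$ and $A$ that avoids, at each stage, the countably many bad algebraic conditions. If this fails, the fallback is to replace the linear path $\tau b$ by a more cleverly chosen continuous path for each generator, tailored so that branches stay separated.
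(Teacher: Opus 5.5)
There is a genuine gap, and it is the step you flag as the main obstacle. The linear deformation $\fhi_\tau(b)=\tau b$ on $F[B]$ extends to \emph{no} subring of countable index, for any transcendence basis $B$, so no transfinite or genericity choice of $B$ can rescue it. The countably many intervals $\pm(3/2)^k\cdot(1,\tfrac32)$, $k$ an integer, cover $B$ (its elements are not rational). Hence there is a rational $\beta_0\neq0$ with $\lambda_b:=b/\beta_0\in(1,\tfrac32)$ for uncountably many $b\in B$. Put $c(u)=2(u/\beta_0)-2(u/\beta_0)^2\in\QQ[u]$ and $p_u(Y)=Y^2+Y+c(u)$. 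Since $c(b)<0$, each such $p_b$ has a real root $y_b$, and these roots are pairwise distinct. If $A\supseteq F[B]$ had countable index, then $z=y_b-y_{b'}\in A$ for some $b\neq b'$ in this family. Now $z$ is a root of
\[
m(b,b';X)=\operatorname{Res}_Y\bigl(p_b(Y),\,p_{b'}(Y-X)\bigr)\in F[b,b'][X],
\]
and universally $m(u,u';X)=\prod(X-(r-r'))$, with $r,r'$ running over the roots of $p_u$ and $p_{u'}$. Applying $\fhi_{1/2}$ shows that $\fhi_{1/2}(z)$ is a \emph{real} number of the form $r-r'$, with $r,r'$ roots of $p_{b/2}$ and $p_{b'/2}$. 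But $\lambda_b/2,\lambda_{b'}/2\in(\tfrac12,\tfrac34)$, where $\mu\mapsto 2\mu-2\mu^2$ is injective with values in $(\tfrac38,\tfrac12)$. So $c(b/2)\neq c(b'/2)$, both exceed $\tfrac14$, and $r=-\tfrac12\pm i\beta$, $r'=-\tfrac12\pm i\beta'$ with distinct $\beta,\beta'>0$. Thus $r-r'$ is never real, a contradiction. The root collisions you hope to avoid are therefore forced by countable index itself, along an uncountable family. Relatedly, you cannot get additivity and multiplicativity from ``uniqueness of continuation'' once branches can meet.

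The valuation-theoretic step is also wrong as stated. For fixed $b_0\in B$, the total-degree valuation on $F(B)$ has residue field $F(b/b_0:b\in B)$, which is uncountable. Evaluation at $0$ is not even defined on its valuation ring (e.g.\ on $b/b_0$), and that ring already has uncountable index in $F(B)$. More fundamentally, no nontrivial valuation of $\RR$ has residue field $F$. By Baer--Krull, a formally real residue field would make the valuation ring convex for the unique ordering of $\RR$, hence equal to $\RR$, since $\RR$ is Archimedean.

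What you are missing is the paper's mechanism:
\begin{itemize}
\item Each generator $X_\alpha$ follows a piecewise linear path through rational approximations $a_{\alpha,n}\to X_\alpha$. Only countable data appear at finite times, and $x$ is reached only in the limit.
\item On each segment, $\fhi_\tau$ is evaluation at $t$ composed with $\res\circ J_n$, where $J_n\colon\RR\hookrightarrow k\pui{s}$ and $k$ is the \emph{countable} field built from $C^\omega_{F(t)}([0,1])$. This makes every $\fhi_\tau$ automatically a ring homomorphism and gives $M_n$ and $A_n$ countable index.
\item The perturbation $su_\alpha$ secures algebraic independence.
\item Goodness at large $n$, together with Lemma~\ref{lem:generic}, puts each $x$ in all but finitely many $A_n$, so $A=\bigcap_n A_n$ keeps countable index.
\end{itemize}
Your fallback of ``a more cleverly chosen path for each generator'' points in this direction but supplies none of these ingredients.
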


\noindent
The non-unital subring of $\RR$ that will be used to construct subgroups is then $I=\ker \fhi_0$ and the announced continuous family of non-unital subrings interpolating from~$0$ to $I$ is $\fhi_\tau(I)$. In our applications, it is sufficient to take $F$ to be the real closure $\QRC = \overline\QQ \cap \RR$ of the rationals.

\medskip

One of the fundamental inputs to prove Theorem~\ref{thm:path} is to consider Puiseux series not over $\overline{\QQ}$ but over a field that already consists of functions of a variable~$t$. This variable will be used for the parameter $\tau$ of the statement, although the actual proof will be more complicated and involve gluing together infinitely many such constructions. Let's be more specific:

The first step is to consider $\RR$ as the real closure of a purely transcendental extension $F \subset F(T).$ Naively, for an element of the basis $X \in T$, viewed as a real number, we would pick a rational approximation $a_n \to X$. The aim is to define a map $j_n \colon F[T] \to C([0,1],\RR)$ such that $j_n(X)$ is just the convex interpolation between $a_n$ and $a_{n+1}$. The extension to an element in the real closure $x \in \RR$ can be defined, if the approximations of all the finitely many relevant elements in $T$ lie in a suitable neighborhood prescribed by the implicit function theorem applied to the minimal polynomial of $x$. This naive approach cannot ensure algebraic independence of the values assigned to the elements of $T$ and hence does not extend to $F(T)$ or its real closure; instead we use an infinitesimal perturbation and assign to $X \in T$ an element in $k\pui{s}$, where $k$ is the algebraic closure of the field of fractions of a suitable ring of analytic functions defined on a neighborhood of $[0,1]$. This is Lemma \ref{lem:extension}, which is the technical heart of the proof. It uses a blend of methods that we have not seen in this combination before. Part of it is the local construction of algebraic-analytic deformations preserving polynomial relations, which already appears in Parusiński--Rond~\cite[Theorem~4]{Parusinski-Rond}.
In particular, it crucially uses Lemma \ref{lem:generic}, that allows to upgrade identities of algebraic real-analytic functions that hold at a generic point to actual identities;
a result that should also be attributed to Parusiński–Rond \cite[proof of Theorem 4, p. 344]{Parusinski-Rond}. These ideas are combined with the use of a new infinitesimal deformation parameter coming from the field of Puiseux series. The field $k\pui{s}$ is still small enough to admit countable index subrings that map to $C([0,1],\RR)$ extending the assignment above.
We then glue this sequence of maps along their endpoints, whose values remain in $F$ for a subring of countable index. Each compatibility condition defines a further subring of countable index, and every real number satisfies all but finitely many of these conditions. Consequently, imposing all conditions simultaneously still leaves a subring of countable index.

The second step is more standard and uses the subring of countable index together with the constructed homotopy in order to trivialize the restriction of the central extension to the subring, or more generally, trivialize the cohomology classes we study; this will be made explicit in Theorem~\ref{thm:coho} below.

\subsection*{AI disclosure:} The key countable-index deformation was first constructed by OpenAI's GPT 6 Astra during extended mathematical discussions. The argument was subsequently developed, checked and substantially rewritten by the authors; see the acknowledgments for its history.

\section{Proof of Theorem \texorpdfstring{\ref{thm:path}}{\getrefnumber{thm:path}}}
\subsection{Puiseux series over fields of analytic functions}\label{sec:Puiseux}
First we recall the classical construction of Puiseux series, which is also used in Thomas's proof. We start with a field $k$ that is assumed countable, algebraically closed and of characteristic zero. Later on, we will choose $k$ as a suitable field constructed from analytic paths that are algebraic over rational functions in the real-closed field $F$ of Theorem~\ref{thm:path}. The main goal is to give better embeddings of $\RR$ into the Puiseux field of $k$ than one coming from a direct application of Steinitz's theorem.

\medskip

The field $k\pui{s}$ of Puiseux series with coefficients in $k$ is the collection of all formal Laurent series over $k$ in some root of the formal parameter $s$. In other words, $k\pui{s} = \bigcup_{m\in\NN} k((s^{1/m}))$. The Newton--Puiseux theorem states that $k\pui{s}$ is algebraically closed because $k$ is so and because we are in characteristic zero.

The valuation $v\colon k\pui{s} \to \QQ \cup \{+\infty\}$ is the exponent of the smallest non-zero term, with $v(0)=+\infty$. Thus, the valuation ring $\sO_{k\pui{s}}$ defined by $v\geq 0$ is of countable index in $k\pui{s}$ as an additive subgroup since the coset of a Puiseux series modulo $\sO_{k\pui{s}}$ is determined by the finitely many coefficients in $k$ that have a negative exponent. The residue map $\res\colon \sO_{k\pui{s}} \to k$ associates to a series its constant coefficient; it is a $k$-algebra morphism.

\medskip
We now start with a countable real-closed field $F\se \RR$ and proceed to define $k$. Consider the field of rational functions over $F$, denoted $F(t)$ to distinguish the formal variable $t$ from the variable $s$ introduced above. We denote by $C^\omega_{F(t)}([0,1])$ the ring of real-valued functions on $[0,1]$ that extend real analytically to an open interval containing $[0,1]$ and are algebraic over $F(t)$. The latter condition is well-defined because we use $t$ as the variable of our analytic functions. We observe that the evaluation morphism at $t=0$ or $t=1$ maps $C^\omega_{F(t)}([0,1])$ to $F$. Indeed, since the elements of $C^\omega_{F(t)}([0,1])$ are algebraic over $F(t)$, the evaluated values are algebraic over $F$ and real, hence belong to $F$.

Finally, $k$ is defined to be an algebraic closure of the field of fractions of $C^\omega_{F(t)}([0,1])$, which we can form since analytic functions form an integral domain. This field is still countable, as required above.

\medskip
Here and below, we denote the continuum cardinal by $\cfrak$. We can choose a family $(u_\alpha)_{\alpha<\cfrak}$ in $k[[s]]^\times$ that is algebraically independent over $k(s)$. Indeed, if $(v_\alpha)_{\alpha<\cfrak}$ is a transcendence
basis of $k((s))$ over $k(s)$, then we set $u_\alpha=s^{-v_s(v_\alpha)}v_\alpha$, which remains algebraically independent over $k(s)$.

\subsection{Algebraic approximations along arcs}
We choose once and for all a transcendence basis $T=(X_\alpha)_{\alpha<\cfrak}$ of $\RR$ over $F$ and an assignment $x\mapsto T_x$ which to each $x\in \RR$ associates a tuple $T_x$ of elements of $T$ such that $x$ is algebraic over $F(T_x)$. In particular, we can view $T_x$ as a vector in $\RR^q$ for some $q$ (which is chosen depending on $x$). We moreover choose for each $x\in \RR$ a convex open neighbourhood $W_x\se \RR^q$ of $T_x$ and a real-analytic function $g_x$ on $W_x$ satisfying $g_x(T_x) = x$, as follows. Considering the minimal polynomial of $x$ over $F(T_x)$, the analytic implicit-function theorem (see Theorem~2.3.5 in~\cite{Krantz-Parks}) provides a real-analytic algebraic branch $g_x$ for the root $x$ in a suitable neighbourhood $W_x$ of $T_x$. We choose $W_x$ small enough to be convex and to ensure that the root $x$ remains simple there; this is possible because the coordinates of $T_x$ are algebraically independent by definition. 

We will use below the following principle, which can be found in work of Parusiński–Rond \cite[proof of Theorem 4, p. 344]{Parusinski-Rond}. We include the proof adapted to our setting for convenience:

\begin{lem}\label{lem:generic}
Let $U \se \RR^q$ be a domain and let $f$ be a real-analytic function on $U$ that is algebraic over $F(z_1, \ldots, z_q)$. If $f$ vanishes at a point whose coordinates are algebraically independent over $F$, then $f$ vanishes identically on $U$.
\end{lem}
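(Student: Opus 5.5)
Since $f$ is algebraic over $F(z_1,\dots,z_q)$, we will start from a nonzero polynomial relation
\[
P(z,f(z))=\sum_{i=0}^{d} p_i(z)\,f(z)^i=0 \quad\text{on } U,
\]
with $p_i\in F[z_1,\dots,z_q]$ after clearing denominators, and we take $P$ of minimal degree $d$ in the last variable among all such relations. The identity holds on all of $U$: the relation is meaningful on an open subset, and since $U$ is connected, analytic continuation propagates it to $U$. We then split into two cases according to whether $p_0$ vanishes identically.

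If $p_0\not\equiv 0$, we evaluate the relation at the given point $a$. Because $f(a)=0$, every term with $i\geq1$ vanishes, leaving $p_0(a)=0$. But the coordinates of $a$ are algebraically independent over $F$, and $p_0$ is a nonzero polynomial with coefficients in $F$, so $p_0(a)\neq0$. This contradiction rules out the case.

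Hence $p_0\equiv0$, and the relation factors as $f\cdot\sum_{i\ge1}p_i f^{i-1}=0$ on $U$. The ring of real-analytic functions on the connected open set $U$ is an integral domain, by the identity theorem. So either $f\equiv0$, which is the claim, or $\sum_{i\ge1}p_i f^{i-1}\equiv0$. The second possibility is a relation of degree $d-1$ that is still nontrivial, since some $p_i$ with $i\ge1$ is nonzero. This contradicts the minimality of $d$ and finishes the argument.

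The only point needing care is that algebraicity yields a relation valid on all of $U$, rather than just near some point; connectedness of the domain $U$ together with analytic continuation handles this. We also use that the polynomials have coefficients in $F$, so that algebraic independence over $F$ applies; this is exactly why the lemma requires algebraicity over $F(z)$ and not over $\RR(z)$.
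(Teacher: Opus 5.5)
Your proof is correct and is essentially the paper's argument: a minimal-degree relation for $f$ over $F(z)$ must have nonzero constant term unless $f\equiv 0$, and evaluating it at the point with algebraically independent coordinates gives the contradiction. The differences are cosmetic. You clear denominators and get the nonzero constant term from minimality plus the integral-domain property of analytic functions on $U$. The paper instead takes the monic minimal polynomial of the germ of $f$ at that point, whose rational coefficients are defined there by algebraic independence, and then passes from the germ to $U$ via the identity theorem.
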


\begin{proof}
Let $\xi\in U$ be the vanishing point in the statement and write $z=(z_1, \ldots, z_q)$. By the real-analytic identity theorem, it suffices to prove that the germ of $f$ at $\xi$ is zero. If not, then its minimal polynomial over $F(z)$
\begin{equation*}
f^d+r_{d-1}(z)f^{d-1}+\cdots+r_0(z)=0
\end{equation*}
has nonzero constant coefficient $r_0$. Algebraic independence implies that every nonzero polynomial over $F$ is nonzero at $\xi$. Thus all the rational functions $r_j$ are defined there and $r_0(\xi)\ne0$. Evaluating the equation contradicts $f(\xi)=0$.
\end{proof}

Next, we choose rational approximations for each $X_\alpha\in T$, that is, sequences $(a_{\alpha, n})_{n\in \NN}$ in $\QQ$ with $a_{\alpha, n} \to X_\alpha$. We then define the affine functions $\ell_{\alpha,n}$, considered as elements of $C^\omega_{F(t)}([0,1])$ and hence also of $k$, by
\begin{equation*}
\ell_{\alpha,n} (t) = (1-t) a_{\alpha, n}  + t a_{\alpha, n+1}.
\end{equation*}
Finally, we define $F$-morphisms
\begin{equation*}
j_n\colon F(T) \longrightarrow k\pui{s}
\end{equation*}
by the requirement
\begin{equation*}
j_n(X_\alpha) = \ell_{\alpha,n} + s u_\alpha \kern3mm \text{for} \kern3mm  X_\alpha\in T,
\end{equation*}
where $u_\alpha\in k[[s]]^\times$ was chosen at the end of Section~\ref{sec:Puiseux}. That choice ensures in particular that all $\ell_{\alpha,n} + s u_\alpha$ are algebraically independent over $F$ so that $j_n$ extends indeed to $F(T)$.

\begin{defi}
We say that $x\in \RR$ is \textbf{good at $n\in \NN$} if both tuples $a_{T_x,n}$ and $a_{T_x, n+1}$ belong to $W_x$.
\end{defi}
In this definition,  $a_{T_x, n}$ is a shorthand for the tuple $(a_{\alpha_1, n}, \ldots , a_{\alpha_q, n})$ when $T_x=(X_{\alpha_1}, \ldots , X_{\alpha_q})$ and likewise for $n+1$. Similarly, we will write $\ell_{T_x,n}$ below.

\smallskip
Note that every given $x$ is good at all sufficiently large $n$ and that this condition implies that $\ell_{T_x,n}$ remains in $W_x$. Thus we can define
\begin{equation*}
 f_{x,n}(t)=g_x(\ell_{T_x,n}(t)).
\end{equation*}
We claim that $f_{x,n}\in C^\omega_{F(t)}([0,1])$. By construction, $f_{x,n}$ is real analytic on an open interval containing $[0,1]$. Substitution of the rational affine functions $\ell_{\alpha,n}$ into the monic equation for $g_x$ gives a monic equation over $F(t)$ whose coefficient denominators do not vanish along the segment, whence the claim.

\medskip

The very definition of $f_{x,n} $ further implies the following.

\begin{lem}\label{lem:tail}
For every $x\in \RR$ we have:
\begin{enumerate}[label=\normalfont(\roman*)]
\item $f_{x,n}(1)=f_{x,n+1}(0)$ for all sufficiently large $n$,\label{lem:tail:coincide}
\item $\lim\limits_{n\to\infty}\,\sup\limits_{0\leq t\leq1}|f_{x,n}(t)-x| = 0$.\qed \label{lem:tail:limit}
\end{enumerate}
\end{lem}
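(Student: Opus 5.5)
Both parts follow from unwinding the definition $f_{x,n}(t)=g_x(\ell_{T_x,n}(t))$ and using only two facts: $g_x$ is continuous on $W_x$ with $g_x(T_x)=x$, and $a_{T_x,n}\to T_x$ in $\RR^q$. Throughout, fix $x$ and restrict to those $n$ at which $x$ is good. This holds for all sufficiently large $n$, because $W_x$ is an open neighbourhood of $T_x$ and $a_{T_x,n}\to T_x$.

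For \ref{lem:tail:coincide}, take $n$ large enough that $x$ is good at both $n$ and $n+1$. Evaluating the affine interpolation at the endpoints gives
\[
\ell_{\alpha,n}(1)=a_{\alpha,n+1}=\ell_{\alpha,n+1}(0)
\]
for every $\alpha$. Hence $\ell_{T_x,n}(1)=\ell_{T_x,n+1}(0)=a_{T_x,n+1}\in W_x$, and applying the same function $g_x$ to both sides yields $f_{x,n}(1)=f_{x,n+1}(0)$.

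The one point that needs care is that both $f_{x,n}$ and $f_{x,n+1}$ are built from the same branch $g_x$ on the same domain $W_x$. They are not separately chosen roots. This holds because $g_x$ and $W_x$ were fixed once for each $x$, independently of $n$. So no branch ambiguity arises, and this is really the only potential obstacle.

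For \ref{lem:tail:limit}, let $\varepsilon>0$. By continuity of $g_x$ at $T_x$, choose $\delta>0$ such that the closed ball $\overline{B}(T_x,\delta)$ lies in $W_x$ and $|g_x(y)-x|<\varepsilon$ on it. Then choose $N$ such that $|a_{T_x,n}-T_x|<\delta$ for all $n\geq N$. For $n\geq N$ and $t\in[0,1]$, the point $\ell_{T_x,n}(t)$ is a convex combination of $a_{T_x,n}$ and $a_{T_x,n+1}$, both of which lie in the convex ball $B(T_x,\delta)$. Therefore $\ell_{T_x,n}(t)\in B(T_x,\delta)$, and so
\[
|f_{x,n}(t)-x|=|g_x(\ell_{T_x,n}(t))-g_x(T_x)|<\varepsilon .
\]
Taking the supremum over $t\in[0,1]$ gives the uniform convergence claimed in \ref{lem:tail:limit}.
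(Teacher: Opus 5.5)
Your proof is correct and takes the same approach as the paper, which leaves the lemma without proof as an immediate consequence of the definition $f_{x,n}=g_x\circ\ell_{T_x,n}$. Part (i) is the endpoint identity $\ell_{T_x,n}(1)=a_{T_x,n+1}=\ell_{T_x,n+1}(0)$ for the single fixed branch $g_x$, and part (ii) is the continuity of $g_x$ at $T_x$ combined with convexity, exactly as you argue.
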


\subsection{A first embedding}
At this point we will obtain our first embedding of the field $\RR$ into $ k\pui{s}$ by extending the embedding $j_n$ of $F(T)$.

\begin{lem}\label{lem:extension}
For every $n$ there is an embedding $J_n:\RR\hookrightarrow k\pui{s}$
extending $j_n$ such that, for every $x\in\RR$ good at $n$,
\[
 J_n(x)\in\sO_{k\pui{s}},\qquad
 \res(J_n(x))=f_{x,n}.
\]
\end{lem}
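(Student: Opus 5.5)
Since $\RR$ is algebraic over $F(T)$, we only need to extend $j_n$ from $F(T)$ to its algebraic closure inside $\RR$. Let $K$ be the real closure of the field $j_n(F(T))$ inside $k\pui{s}$; here $k\pui{s}$ is algebraically closed of characteristic zero. The plan is to send each $x\in\RR$ to a \emph{specific} root in $k\pui{s}$ of its minimal polynomial $P_x$ over $F(T_x)$. This root is picked out by Hensel's lemma, and we must then check that these choices are compatible with the field operations.

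\emph{Step 1: a distinguished root.} Let $x$ be good at $n$. Clear denominators so that $P_x(T_x;Y)$ is a polynomial in $(z,Y)$ with coefficients in $F$. Substitute $z=j_n(T_x)=\ell_{T_x,n}+s\,u_{T_x}$, which lies in $\sO_{k\pui{s}}$ and has residue $\ell_{T_x,n}$. The residue of the resulting polynomial is $P_x(\ell_{T_x,n}(t);Y)$ over $k$. By the choice of $W_x$, the function $f_{x,n}$ is a simple root of it: the discriminant, or $\partial_Y P_x$ evaluated at $(\ell_{T_x,n}(t), g_x(\ell_{T_x,n}(t)))$, is a nonzero analytic function. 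For the Hensel argument to run, the leading coefficient must also have residue nonzero in $k$. This holds because the coefficients, evaluated along the segment, are nonzero functions of $t$, hence nonzero in $k$. Hensel's lemma over the complete valuation ring $k[[s^{1/m}]]$ then gives a unique root $\xi_{x,n}\in\sO_{k\pui{s}}$ with $\res\xi_{x,n}=f_{x,n}$.

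\emph{Step 2: compatibility.} Define $J_n(x)=\xi_{x,n}$ for good $x$, and extend to all $x$. The obstacle is to show that the assignment is induced by a field embedding: any polynomial relation over $F$ among finitely many good $x_1,\dots,x_r$ must be respected. The tool for this is Lemma~\ref{lem:generic}. Collect all the variables into a single tuple $T'$ and work on a convex neighbourhood $W\se\bigcap W_{x_i}$ (after projection) of $T'$, on which each $g_{x_i}$ is analytic. Suppose $Q(x_1,\dots,x_r)=0$ with $Q\in F[T'][Y]$. Then the function $h=Q(z;g_{x_1},\dots,g_{x_r})$ is real analytic and algebraic over $F(z)$, and it vanishes at the point $T'$, whose coordinates are algebraically independent. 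Lemma~\ref{lem:generic} therefore gives $h\equiv0$ on $W$.

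We then have to transfer this identity to the Puiseux side. Let $\tilde g_i$ denote the Henselian root of $P_{x_i}$ over $\sO$ at the generic point $j_n(T')$. The claim is that $Q(j_n(T');\tilde g_1,\dots)=0$. This is the hardest step. The plan is to note that the whole construction is an evaluation of the germ of the algebraic analytic function $g_{x_i}$ at the formal point $\ell_{T',n}(t)+s u$, defined by Taylor expansion in $s$. Concretely, $\tilde g_i=\sum_\beta \partial^\beta g_{x_i}(\ell_{T',n})\,(su)^\beta/\beta!$, where the coefficients $\partial^\beta g(\ell(t))$ lie in $C^\omega_{F(t)}([0,1])\se k$, since derivatives of algebraic functions are algebraic. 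This series satisfies $P_{x_i}$ formally, and it has the correct residue, so by uniqueness it equals $\tilde g_i$. Evaluation at formal points is a ring homomorphism from analytic germs along the segment, so $h\equiv0$ yields $Q(j_n(T');\tilde g)=0$.

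This requires $\ell_{T',n}([0,1])\se W$, which is where convexity of the $W_x$ and goodness at $n$ are used. There is a subtlety when the $x_i$ have different $T_{x_i}$: we must pass to a common neighbourhood, viewing each $g_{x_i}$ as a function of the larger tuple that is constant in the extra coordinates. The intersection of the cylinders over the $W_{x_i}$ is convex and contains both endpoints, so this works.

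\emph{Step 3: extension to all of $\RR$.} Step 2 shows that the good elements generate, together with $F(T)$, a subfield $L$ on which $J_n$ is a well-defined embedding. Indeed, every element of $L$ is a rational expression in good elements and elements of $T$, and consistency follows from Step 2 applied to finite sets. Since $k\pui{s}$ is algebraically closed and $\RR$ is algebraic over $L$, the embedding extends (non-canonically, by Zorn's lemma) to $J_n\colon\RR\hookrightarrow k\pui{s}$. By construction it extends $j_n$ and satisfies $J_n(x)=\xi_{x,n}\in\sO$ with $\res J_n(x)=f_{x,n}$ for every $x$ good at $n$.
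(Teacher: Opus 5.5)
Your proposal is correct and follows essentially the same route as the paper. There too, $J_n(x)$ for good $x$ is the Taylor series of $g_x$ along the arc $\ell_{T_x,n}$ evaluated at the increments $s u_\alpha$; compatibility with relations over $F(T)$ comes from Lemma~\ref{lem:generic} on the convex intersection of the pulled-back $W_{x_i}$, followed by Taylor expansion along the arc; and the extension to $\RR$ uses that $F(T)[\,x \text{ good}\,]$ is a field and that $k\pui{s}$ is algebraically closed.

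Your Hensel-lemma step only adds an intrinsic characterization of $J_n(x)$ as the unique root of $P_x$ with residue $f_{x,n}$, which makes explicit the paper's remark that this is ``the only choice''. Two minor points: the leading-coefficient condition you impose there is not needed for the simple-root form of Hensel's lemma, and the ``real closure'' $K$ in your opening paragraph does not make sense in the non-orderable field $k\pui{s}$, though it is never used.
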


\begin{proof}
First, we define $J_n(x)$ for any $x$ that is good at $n$. Since $j_n(X_\alpha)$ is $\ell_{\alpha,n} + s u_\alpha$, the only choice is as follows. Consider the multivariate Taylor series of $g_x$ at the point $\ell_{T_x,n}(t)$. Evaluate that series at the variables $s u_\alpha$, where $\alpha$ indexes the coordinates of $T_x$. This defines a formal series $J_n(x)$ in $s$ whose coefficients depend on $t$. We claim that these coefficients of the $s$-series lie in $k$. Indeed, implicit differentiation expresses each derivative occurring in the definition of Taylor series as a rational expression in the coordinates, the branch value, and the polynomial coefficients together with their derivatives. Thus every Taylor series coefficient for the variables $s u_\alpha$ is a ratio of functions that are real analytic on a neighborhood of $[0,1]$ and algebraic over $F(t)$. Such ratios define elements of the field of fractions of $C^\omega_{F(t)}([0,1])$, which lies in $k$. This shows that, when viewed as a series in $s$, the coefficients are indeed in $k$.

Thus $J_n(x)$ belongs to $k[[s]] \se \sO_{k\pui{s}}$ and its residue is the constant term of the Taylor series, namely, $\res(J_n(x))=f_{x,n}$, which is in $C^\omega_{F(t)}([0,1])$ as already observed after the definition of $f_{x,n}$.

\smallskip
Next, we want this choice of $J_n(x)$ for good $x$ to extend to a morphism
\begin{equation}\label{eq:Jn}
F(T)[\,x\in\RR:x\text{ good at }n\,]\longrightarrow k\pui{s}.
\end{equation}
For this to be possible, we need to check that polynomial relations over $F(T)$ are preserved. 
The compatibility argument used below follows the method of Parusiński--Rond~\cite[Theorem~4]{Parusinski-Rond} that we repeat in adapted form for convenience. Consider thus a finite family $(x_1, \ldots, x_r)$
of good elements and a polynomial relation among them with coefficients in $F(T)$; clear the coefficient denominators. 

Denote by $A$ the (finite) collection of all indices $\alpha$ with $X_\alpha\in T_{x_i}$ for some $i=1, \ldots, r$ together with all indices occurring in the coefficients of the relation. We obtain a convex domain $U$ in $\RR^A$ by intersecting the pull-back of all $W_{x_i}$. Write the cleared relation as
\[
 Q(X_A,x_1,\ldots,x_r)=0,
\]
where $Q$ is a polynomial with coefficients in $F$. The function
\[
Q(z_A,g_{x_1}(z),\ldots,g_{x_r}(z))
\]
on $U$ is analytic,
algebraic over $F(z_A)$, and zero at the algebraically independent
tuple $X_A$. Therefore it is identically zero by Lemma~\ref{lem:generic}.
This forces the local branches to respect every relation between their
actual real values, on the entire common neighborhood containing the arc.

Taking Taylor series along the arc preserves this identity; therefore,
\[
Q\big(j_n(T_A), J_n(x_1),\ldots, J_n(x_r)\big)=0 \kern3mm \text{in} \kern3mm k[[s]].
\]
Thus the original relation, including its base-field coefficients, is preserved. This justifies that $j_n$ extends indeed to a $F(T)$-algebra homomorphism as in~\eqref{eq:Jn} above.

Finally, since $\RR$ is algebraic over $F(T)$, the ring $F(T)[\,x\in\RR:x\text{ good at }n\,]$ is a field, so that this extension~\eqref{eq:Jn} is injective. Using moreover that $k\pui{s}$ is algebraically closed, this embedding extends to $\RR$ as stated.
\end{proof}

\subsection{Gluing the embeddings}
We are now ready to prove Theorem~\ref{thm:path}. We define unital $F$-subalgebras $M_n$ of $\RR$ by
\begin{equation*}
M_n = \{ x\in \RR : J_n(x) \in \sO_{k\pui{s}} \text{ and } \res J_n(x) \in C^\omega_{F(t)}([0,1]) \},
\end{equation*}
where we recall that $C^\omega_{F(t)}([0,1])\se k$ is the ring of analytic functions introduced in the construction of $k$. Since $k\pui{s}/\sO_{k\pui{s}}$ and $k$ are countable, it follows that $M_n$ has countable index in $\RR$ (here and always: as additive subgroup).

Since the elements of $C^\omega_{F(t)}([0,1])$ are functions of a variable $t$, we can use the evaluations $t=0,1$ and further define
\begin{equation*}
A_n = \big\{ x\in M_n \cap M_{n+1} : (\res J_n(x) )(1) =  (\res J_{n+1}(x) )(0) \big\}.
\end{equation*}
Finally, $A = \bigcap_{n\in\NN} A_n$. Note that all $A_n$ and hence also $A$ are unital $F$-subalgebras of $\RR$.

The intersection $M_n \cap M_{n+1}$ has countable index in $\RR$ and hence so does $A_n$ because it is the kernel in $M_n \cap M_{n+1}$ of an additive map to $F$, which is countable. We claim that $A$ also has countable index in $\RR$. Indeed, by Lemma~\ref{lem:tail}\ref{lem:tail:coincide}, every $x\in \RR$ belongs to $A_n$ when $n$ is large enough; therefore, the quotient group $\RR/A$ embeds into $\bigoplus_{n\in\NN} \RR/A_n$, which is countable.

\medskip
At this time, for $x\in A$, we can glue together the maps $t\mapsto  (\res J_n(x) )(t)$ as follows. Define $\Phi_\tau\colon A \to\RR$ for $\tau\in\RR_{\geq 0}$ by
\begin{equation*}
\Phi_\tau (x) =  \big(\res J_n(x) \big)(\tau - n), \kern3mm n=\lfloor\tau \rfloor
\end{equation*}
where $n=\lfloor\tau \rfloor$ is the integer part of $\tau$. The definition of $A_n$ ensures that for a given $x$ this is continuous as a function of $\tau\in\RR_{\geq 0}$ and each $\Phi_\tau$ is a unital ring homomorphism fixing $F$; moreover, $\Phi_0(A)=F$ holds. We will obtain the path $\fhi_\tau$ of Theorem~\ref{thm:path} from any continuous change of variable identifying  $\RR_{\geq 0}$ with $[0, 1)$ provided we justify 
\begin{equation*}
\forall\, x\in A: \ \lim_{\tau\to\infty} \Phi_\tau (x) = x.
\end{equation*}
This convergence follows from Lemma~\ref{lem:tail}\ref{lem:tail:limit}.

\section{Proof of Theorem \texorpdfstring{\ref{thm:Ulam}}{\getrefnumber{thm:Ulam}}}

Let $G$ be a connected semisimple Lie group. Consider the adjoint group $\Ad(G)$ of $G$. Namely, denoting the Lie algebra if $G$ by $\mathfrak g$, this is the image $\Ad(G)<\Aut(\mathfrak g)$ of $G$ in the automorphism group of $\mathfrak g$ under the adjoint representation. As Lie groups, $\Ad(G)\cong G/Z_G$, where $Z_G$ denotes the center of $G$.

The new contribution that is needed to solve Ulam's problem is the following:

\begin{thm}\label{thm:new}
There exists a subgroup of countable index in $\Ad(G)$ that can be lifted to $G$.
\end{thm}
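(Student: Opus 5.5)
Let $H=\Ad(G)$ and $\pi\colon \wt H\to H$ the universal cover; $G$ is a quotient of $\wt H$ by a subgroup of the discrete central group $\pi_1(H)$. It therefore suffices to find a countable-index subgroup $\Gamma<H$ that lifts to $\wt H$, i.e.\ a homomorphism $s\colon\Gamma\to\wt H$ with $\pi\circ s=\Id$. Composing $s$ with $\wt H\to G$ then gives the lift to $G$.

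\textbf{Step 1: algebraic model.} $\Aut(\mathfrak g)$ is the real points of an algebraic group $\mathbf H$ defined over $\QQ$, and $H$ is the identity component of $\mathbf H(\RR)$. I would take $F=\QRC$ and the subalgebra $A$ together with the path $(\fhi_\tau)$ from Theorem~\ref{thm:path}, and put $I=\ker\fhi_0$. Choose a faithful $\QQ$-embedding $\mathbf H\hookrightarrow\GL_N$. The candidate subgroup is the principal congruence subgroup
\[
\Gamma=\{h\in H: h\in \GL_N(A),\ h\equiv \Id \bmod I\},
\]
that is, matrices $\Id+M$ with $M$ having entries in $I$, which are automatically invertible over $A$ whenever they lie in $H$. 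Countable index of $\Gamma$ in $H$ is the standard Thomas-type argument. The set $\mathbf H(A)$ has countable index: a coset is determined by the coset of the entries modulo $A$ in a chart, and one must control the inverse, but $\det$ and the equations are defined over $F\subseteq A$. Moreover $\fhi_0\colon\mathbf H(A)\to \mathbf H(F)$ is a homomorphism with countable target, whose kernel is $\Gamma$. The finer point, which I would handle following \cite{Thomas99}, is arranging countable index of $\mathbf H(A)\cap H$ in $H$ itself, not just in a larger matrix set.

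\textbf{Step 2: the path-lifting map.} For $h\in\Gamma$, the entrywise application of the ring homomorphisms $\fhi_\tau$ gives matrices $\fhi_\tau(h)\in\mathbf H(\RR)$, because the defining equations have coefficients in $F$, which is fixed by $\fhi_\tau$. By property (iii) of Theorem~\ref{thm:path} the map $\tau\mapsto\fhi_\tau(h)$ is continuous; at $\tau=0$ it equals $\Id$ and at $\tau=1$ it equals $h$. The path starts at $\Id$, so it stays in the identity component $H$. Define $s(h)\in\wt H$ as the endpoint of the unique lift of this path starting at the identity of $\wt H$. Since each $\fhi_\tau$ is a ring homomorphism, $\fhi_\tau(hh')=\fhi_\tau(h)\fhi_\tau(h')$ for all $\tau$. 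Lifts of pointwise products of paths are pointwise products of the lifts, by uniqueness of path lifting in the covering group. Hence $s$ is a homomorphism with $\pi\circ s=\Id$.

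\textbf{Main obstacle.} I expect Step 1 to be the main difficulty: making sure the congruence subgroup actually has countable index in $H$ when $H$ is only an identity component of real points. To handle it, I would intersect with $H$, which has finite index in $\mathbf H(\RR)$, and use that the $A$-points have countable index by the same coordinatewise argument used for $\SL_N$ in the linear case. Step 2 is essentially formal once Theorem~\ref{thm:path} is available.
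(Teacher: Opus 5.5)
Your proposal is correct and is essentially the paper's own elementary argument from Remark~\ref{ref:ext}: take $\Gamma=\Ad(G)\cap\SL_d(\RR,I)$ and lift the paths $f_\tau(\gamma)$ through the covering. The paper's main route goes through the cohomological Theorem~\ref{thm:coho} instead, and your detour through the universal cover $\wt H$ is harmless but unnecessary, since $G\to\Ad(G)$ is already a covering. In Step 1, the ``entries modulo $A$'' heuristic does not actually control multiplicative cosets, but there is also no real obstacle there: once $\SL_N(\RR,I)$ has countable index in $\SL_N(\RR)$ (via generation by root subgroups, as in Thomas), $H/\Gamma$ injects into $\SL_N(\RR)/\SL_N(\RR,I)$, so $H$ being only an identity component plays no role.
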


To be perfectly unambiguous, the lifting of a subgroup $\Gamma<\Ad(G)$ means that there exists a subgroup $\wt \Gamma< G$ that maps bijectively onto $\Gamma$ under the adjoint representation.

For the purposes of applying the results of~\cite{Conversano-Monod}, we could further assume that $G$ is simple and that the center $Z_G$ is finite, but this would not simplify the argument below.

\begin{proof}[Proof that Theorem~\ref{thm:new} implies Theorem~\ref{thm:Ulam}]
In view of Theorem~1.5 in~\cite{Conversano-Monod}, it suffices to show that every connected simple Lie group with finite center is countably representable. We shall show that this holds for our given connected semisimple Lie group $G$.

Since $\Ad(G)$ is linear, we know from Thomas's theorem that it acts faithfully on some countable set $X$. Let now $\Gamma<\Ad(G)$ be a subgroup of countable index in $\Ad(G)$ that can be lifted to a subgroup $\wt \Gamma< G$. It suffices to verify that the $G$-action on $G/\wt \Gamma \times X$ is faithful. The kernel of the action on $G/\wt \Gamma$ meets $Z_G$ trivially, whereas the kernel of the $G$-action on $X$ is exactly $Z_G$. This confirms that the action on the product is faithful.
\end{proof}

\begin{rem}
In fact, Theorem~\ref{thm:new} is strong enough that one does not need to appeal to Thomas's result as above. Instead, after reducing to the case where $G$ is simple as a Lie group, one could directly act on $G/\wt \Gamma$ and use the fact that $\Ad(G)$ is simple as an abstract group, as proved by Goto, Corollary~3 in~\cite{Goto69}.
\end{rem}

We now discuss our approach to Theorem~\ref{thm:new}. Choosing a basis for $\mathfrak g$, we can consider $\Ad(G)$ as a matrix group. More precisely, $\Ad(G)$ is the identity component if the linear algebraic group $\Aut(\mathfrak g)$ defined by the structure equations associated to the chosen basis of $\mathfrak g$.

The subgroup $\Gamma<\Ad(G)$ will be obtained by restricting the coefficients in $\Ad(G)$ to a carefully chosen non-unital subring $I$ of $\RR$ in this matrix representation. As usual, this means that we consider matrices $M$ such that $M-\Id$ has all coefficients in $I$, and $\Gamma$ will have countable index iff $I$ has countable index an an additive subgroup of $\RR$. The entire difficulty is to choose $I$ is such a way that $\Gamma$ can be lifted to the central extension $G$ of $\Ad(G)$.

Equivalently, the goal is to find $I$ in such a way that the corresponding lifting obstruction in the degree two ``continuous'' cohomology of the topological group $\Ad(G)$ vanishes when restricted to $\Gamma$. We recall that even though the cohomology of \emph{topological} groups is notoriously delicate with a number of different definitions, the situation is particularly clear when the coefficients are a trivial discrete module as in our case, where the coefficients are $Z_G$. An excellent modern source for the relevant definitions and references is~\cite{Austin-Moore} (despite its name, one should not think of continuous cohomology as being represented by continuous cochains on $G$ in general.)

We shall in fact obtain a stronger result than this degree-two vanishing:

\begin{thm}\label{thm:coho}
There exists a non-unital subring $I$ of $\RR$ of countable index such that every continuous cohomology class of $\Ad(G)$ for any discrete trivial coefficients in any positive degree vanishes when restricted to the subgroup of countable index $\Gamma<\Ad(G)$ corresponding to $I$.
\end{thm}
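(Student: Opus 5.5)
Take $F=\QRC$ and let $A$, $(\fhi_\tau)_{0\le\tau\le1}$ be given by Theorem~\ref{thm:path}; set $I=\ker\fhi_0$. Since $\fhi_0(A)=F$ and $\fhi_0$ fixes $F$, we have $A=F\oplus I$. Hence $I$ has countable index in $A$, and therefore in $\RR$. Fix a basis of $\mathfrak g$ with structure constants in $\QQ$. Then $\Aut(\mathfrak g)$ is defined by polynomial equations over $\QQ$, and $\Gamma$ is the set of $M\in\Ad(G)$ with $M-\Id\in \mathrm{Mat}(I)$. Because $I$ is a non-unital ring, $\Gamma$ is closed under products. I would obtain inverses via Cramer's rule, possibly after shrinking $I$ to the ideal where $\det^{-1}$ stays in $1+I$, which costs only countable index; alternatively one can define $\Gamma$ as the group generated. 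Countable index of $\Gamma$ in $\Ad(G)$ then follows from the standard argument in the linear case.

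The key step is a homotopy of homomorphisms. For each $\tau$, apply $\fhi_\tau$ entrywise to $\Gamma$. Since $\fhi_\tau$ is a unital ring homomorphism fixing $\QQ$, it maps points of $\Aut(\mathfrak g)(A)$ to points of $\Aut(\mathfrak g)(\RR)$. This gives group homomorphisms $\rho_\tau\colon\Gamma\to\Aut(\mathfrak g)(\RR)$ with the following properties: $\rho_1$ is the inclusion; $\rho_0$ is trivial, because $\fhi_0(I)=0$ sends every $M\in\Gamma$ to $\Id$; and for each $\gamma\in\Gamma$ the map $\tau\mapsto\rho_\tau(\gamma)$ is continuous by Theorem~\ref{thm:path}(iii). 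Since $\rho_0(\gamma)=\Id$, each path lies in the identity component $\Ad(G)$. So we obtain a map
\[
H\colon [0,1]\times\Gamma\to\Ad(G),\qquad H(\tau,\gamma)=\rho_\tau(\gamma),
\]
which is continuous when $\Gamma$ carries the discrete topology and which is a homomorphism in $\gamma$ for each $\tau$.

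It then suffices to show that homotopic homomorphisms $\Gamma_\delta\to\Ad(G)$ induce the same map on continuous cohomology with discrete trivial coefficients. Granting this, the restriction to $\Gamma$ is $\rho_1^*=\rho_0^*$, and $\rho_0^*$ factors through the trivial group, so it vanishes in positive degree. To prove the homotopy invariance, I would use the fact that for locally compact (indeed Lie) groups and discrete coefficients, continuous (Moore / measurable) cohomology agrees with the cohomology of the classifying space $B\Ad(G)$ with its topology (Wigner, Moore; see~\cite{Austin-Moore}). Restriction to the discrete group $\Gamma$ is then induced by the map $B\Gamma_\delta\to B\Ad(G)$. The family $\rho_\tau$ gives, by functoriality of the bar construction, a map $[0,1]\times B\Gamma_\delta\to B\Ad(G)$. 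This map is continuous because on each simplex of $B\Gamma_\delta$ it depends on only finitely many group elements, each moving continuously. It is therefore a homotopy from $B\rho_0$ to $B\rho_1$, and hence $B\rho_0^*=B\rho_1^*$ on $H^*(B\Ad(G);Z)$.

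The main obstacle is this identification of continuous cohomology with discrete coefficients with singular cohomology of $B\Ad(G)$, naturally in the homomorphism $\Gamma_\delta\to\Ad(G)$, together with checking continuity of the induced homotopy on the geometric realization. I would first try to cite the comparison theorem of~\cite{Austin-Moore}. Failing that, I would argue directly, using that $H^*_c(\Ad(G);Z)$ is the cohomology of the classifying space of the Lie group $\Ad(G)$, realized as the fat realization of the simplicial space $\Ad(G)^\bullet$.
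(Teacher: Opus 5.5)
Your proposal is correct and follows the paper's proof essentially step by step. It uses the same $I=\ker\fhi_0$ and $\Gamma$, the same entrywise homotopy $\rho_\tau$ from the inclusion to the trivial homomorphism, and the same appeal to the Wigner/Austin--Moore identification of Moore cohomology with (sheaf) cohomology of the classifying space plus homotopy invariance. Your hedging about inverses is unnecessary: $\Ad(G)\subseteq\SL_d(\RR)$ because it is connected and preserves the Killing form, so $M^{-1}=\mathrm{adj}(M)$ and $M^{-1}-\Id$ automatically has entries in $I$.
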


This indeed contains Theorem~\ref{thm:new} because the obstruction to lift a subgroup is the pull-back of the class in the second continuous cohomology of $\Ad(G)$ with coefficients in the trivial discrete module $Z_G$ by the very definitions of Moore and Segal. See in particular Propositions~3.3 and~4.3 in~\cite{Segal70} (the latter on page 384; on the previous page another statement is also mistakenly labeled as Proposition~4.3 instead of~4.2). See also Theorem~10 in~\cite{Moore76}.

Nevertheless, we comment in Remark~\ref{ref:ext} below how the central extension case can be understood without using the full power of the Moore--Segal--Wigner cohomology.

\medskip
In conclusion, it only remains to establish:

\begin{proof}[Proof of Theorem~\ref{thm:coho}]
Let first $\Gamma$ be any group (without topology) and let $f_\tau \colon\Gamma\to \Ad(G)$ be a family of group homomorphisms indexed by $\tau\in[0, 1]$ such that $f_\tau(\gamma) \in \Ad(G)$ depends continuously on $\tau$ for any given $\gamma\in \Gamma$. Let $Z$ be an abelian group, considered as a discrete trivial module for both $\Ad(G)$ and $\Gamma$. One very concrete way to implement the pull-back through $f_\tau$ from the continuous cohomology of $\Ad(G)$ to the ordinary cohomology of $\Gamma$ is to use Moore's Borel cohomology $H^*_\mathrm{M}(\Ad(G), Z)$, in which case a $Z$-valued Borel cocycle can be restricted to a cocycle on $f_\tau(\Gamma)$, compare e.g. pp.~27--28 in~\cite{Moore76}. In order to use the continuity of the family $f_\tau$, we recall that in the present setting $H^*_\mathrm{M}(\Ad(G), Z)$ is isomorphic to the sheaf cohomology $H^*_\mathrm{Sh}(B_{\Ad(G)}, Z)$ of Milnor's join $B_{\Ad(G)}$ with coefficient in the constant sheaf also denoted by $Z$. This is proved by Wigner in~\cite{Wigner_PhD, Wigner73} but can be read also in Theorem~E of~\cite{Austin-Moore}. In the discrete case, the corresponding fact for $B_\Gamma$ is completely classical. Under these isomorphisms, the pull-back will be induced by the maps $Bf_\tau\colon B_\Gamma \to B_{\Ad(G)}$ determined by $f_\tau$. Our assumption provides a homotopy between $Bf_0$ and $Bf_1$, compare Theorem~8.1 in~\cite{Dold63}, and this implies that the corresponding maps in cohomology are identical, see e.g. Theorem~IV.1.1 in~\cite{Iversen}.

\medskip
We now apply this to the following set-up for Theorem~\ref{thm:coho}. Let $F=\QRC$ be the real-closure of the rationals and consider the $F$-subalgebra $A\se \RR$ provided by Theorem~\ref{thm:path} together with the path $\fhi_\tau\colon A\to\RR$ of unital ring homomorphisms. Define $I=\ker \fhi_0$, which is a non-unital subring of $\RR$ that has countable index as a subgroup. 

In order to realize $\Aut(\mathfrak g)$ and $\Ad(G)$ as matrix groups, we need to choose a basis of the Lie algebra. We can take the basis in such a way that the structure constants are integers, see Proposition~14.3 in~\cite{Raghunathan}. (This is not crucial: otherwise we could enlarge $F$ to contains all structure constants). Writing $d=\dim(\mathfrak g)$, the group $\Aut(\mathfrak g)$ is now an algebraic subgroup of $\GL_d(\RR)$. The group $\Aut(\mathfrak g)$ preserves the Killing form, which is non-degenerate by Cartan's characterization of semi-simplicity~\cite[I.1.42]{Knapp02}; therefore, $\Aut(\mathfrak g)$ lies in fact in the subgroup $\SL^\pm_d(\RR)$ of matrices with determinant $\pm 1$. Thus $\Ad(G)$ is in $\SL_d(\RR)$, and it is in fact the connected component of the identity of $\Aut(\mathfrak g)$, see e.g. Corollary~6.5 in~\cite[II]{Helgason01}.

\medskip
Formally, restricting coefficients to $I$ means the following. Denote by $\SL_d(\RR, I)$ the collection of matrices $M$ satisfying that $M-\Id$ has all its coefficients in $I$. This is a subgroup because $I$ is a subring. The fact that $I$ has countable index in $\RR$ as an additive subgroup implies that  $\SL_d(\RR, I)$ has countable index in $\SL_d(\RR)$, e.g. by considering that root subgroups generate the latter (compare also Remark~1 in~\cite{Ershov-Churkin}). Therefore, the group $\Gamma = \Ad(G) \cap \SL_d(\RR, I)$ has countable  index in $\Ad(G)$.

We now define the homomorphisms $f_\tau \colon\Gamma\to \Ad(G)$ by applying entry-wise $\fhi_\tau$ to the matrix coefficients of $\Gamma$; we need to justify that the resulting matrix still lies in $\Ad(G)$. The structure equations defining $\Aut(\mathfrak g)$ within $\GL_d(\RR)$ belong to $F$, so that  $f_\tau$ ranges in $\Aut(\mathfrak g)$. On the other hand, since $\Ad(G)$ is the identity component of that group and since $f_1(\Gamma) = \Gamma \se \Ad(G)$, continuity of each individual $\tau\mapsto f_\tau(\gamma)$ implies that all of $f_\tau(\Gamma)$ remains in $\Ad(G)$.

At this point, we are in the setting considered in the first paragraph of the proof. The original inclusion of $\Gamma$ in $\Ad(G)$ is realized by $f_1$, and we conclude that the pull-back to $\Gamma$ of the cohomology of $\Ad(G)$ can equivalently be done through $f_0$. However, $f_0(\Gamma)$ is the trivial group; the statement follows.
\end{proof}

\begin{rem}\label{ref:ext}
The proof of the special case of Theorem~\ref{thm:new} can be phrased elementarily as follows. Consider the subgroup $\Gamma <\Ad(G)$ defined above; we need to show that it can be lifted to $G$. The adjoint representation is, topologically, a covering map $G \to \Ad(G)$. Therefore, given any $\gamma\in \Gamma$, the homotopy lifting property and $f_0(\Gamma)=\{e\}$ implies that there is a unique lift of paths
\begin{equation*}
\tau \longmapsto \wt f_\tau(\gamma) \in G \kern3mm \text{such that}\kern3mm \Ad\big(\wt f_\tau(\gamma) \big) = f_\tau(\gamma)  \kern3mm \text{and}\kern3mm \wt f_0(\gamma) =e.
\end{equation*}
By uniqueness, it follows that $\wt f_\tau$ is a group homomorphism for any given $\tau$. Now $\wt f_1(\Gamma)$ is the desired group lift of $\Gamma$.
\end{rem}

\section*{Acknowledgments}
The proof emerged from three weeks of discussions with OpenAI's GPT, initially aimed at exploring $\SL_3(\RR)$ with ring deformations and $K$-theoretic arguments.
GPT Astra eventually produced the countable-index deformation underlying Theorem~\ref{thm:path}. We then suggested extending the argument to $\SL_n(\RR)$ and other simple Lie groups; combined with the reduction results of \cite{Conversano-Monod}, these extensions led to the general theorem.

The proof was substantially rewritten and checked by both authors. During this process, they identified relevant references in which substantial parts of the argument already appear, notably the algebraic-analytic construction of Parusiński--Rond \cite{Parusinski-Rond}. The simultaneous Puiseux embedding with prescribed residues in Lemma~\ref{lem:extension}, and its combination with the countable-index gluing argument, appear to be new and creative.


\bibliographystyle{amsalpha}
\bibliography{ulam}

\end{document}